\newtheorem{theorem}{Theorem}
\newtheorem{conjecture}[theorem]{Conjecture}
\newtheorem{lemma}[theorem]{Lemma}
\newtheorem{problem}[theorem]{Problem}
\newtheorem{definition}[theorem]{Definition}
\newtheorem{example}[theorem]{Example}
\newtheorem{remark}[theorem]{Remark}
\begin{document}

\title{Difference between minimum light numbers of sigma-game and lit-only sigma-game
\footnote{Supported by NNSFC (No.
10871128), STCSM (No. 08QA14036) and Chinese Ministry of Education
(No. 108056).}}
\author{Xinmao Wang\footnote{Department of Mathematics, University of Science and
Technology of China, Hefei, Anhui 230026, China.}~, Yaokun Wu\footnote{Department of
Mathematics, Shanghai Jiao Tong University, Shanghai 200240, China. Corresponding author.
E-mail address: ykwu@sjtu.edu.cn}}
\date{}
\maketitle

\begin{abstract}

A configuration of a graph is an assignment of   one of two states,
on or off, to each vertex of it.    A regular move at a vertex
changes the states of the neighbors of that vertex. A valid move is
a regular move at an on vertex.  The following result is proved in
this note: given any starting configuration $x$ of a tree, if there
is a sequence of regular moves which brings $x$ to another
configuration in which there are $\ell$ on vertices then there must
exist a sequence of valid moves which takes $x$    to a
configuration with at most $\ell +2$ on vertices.  We provide
example to show that the upper bound $\ell +2$ is sharp.  Some
relevant results and conjectures are also reported.
\\

\noindent Keywords:   Appropriate vertex;   Sigma game;   Lit-only
sigma game;  Tree
\end{abstract}

\section{Sigma-game}

We consider graphs without multiple edges but may have loops. That
is, for any graph $G$ with {\em vertex set} $V(G)$, its {\em edge
set} $E(G)$ is a subset of ${V(G)\choose 2}\cup V(G)$ and we say
that there is a {\em loop} at a vertex $v$ of $G$ provided $\{v\}\in
E(G)$. Two vertices $u$ and $v$, possibly equal, are {\em adjacent}
in $G$ if $\{u\}\cup \{v\}\in E(G)$. The {\em neighbors} of $v$ in
$G,$ denoted $N_G(v)$, is the set of vertices adjacent to $v$ in
$G.$  For any $v\in V(G)$,  $\chi_v\in \mathbb{F}_2^{V(G)}$ stands
for  the  binary  function for which  $\chi_v(u)=1$ if $u=v$ and
$\chi_v(u)=0$ otherwise.  For any $S\subseteq V(G)$, we set $\chi_S$
to be   $\sum_{v\in S}\chi_v\in \mathbb{F}_2^{V(G)}$.

The so-called {\em sigma-game} on a graph $G$ is a solitaire
combinatorial game widely studied in the literature
\cite{BS,Beal,BL,CLWZ,Clausing,CHKS,DW,EES,Gassner,GLR,GK97,GKW,GK05,Hatzl,Joyner,LWZ,LZ,LL,MN,Meu,Sutner89,Sutner00,Zai}.
Let us call each element of $\mathbb{F}_2^{V(G)}$ a {\em
configuration} of $G$. We can think of a configuration $x$ as an
assignment of one of two states, ¡°on¡± or ¡°off¡±, to the vertices
of $G$ such that $v$ is on if $x(v)=1$ and $v$ is off if $x(v)=0.$
The {\em light number} of a configuration $x$, written as $L(x)$,
refers to the number of vertices which are assigned the on state  by
$x.$ Given any configuration $x$, a {\em move} of the sigma-game is
to pick  a vertex $v$ and toggle the states of all its neighbors
between on and off, namely to transform the configuration $ x$ into
$ x+
 \chi_{ N_G(v)}$. The goal of the sigma-game is to transform a
given configuration by repeated moves to some configuration which is
as good as possible according to certain criterion, say minimizing
the light number of  the configuration.

If we restrict the moves of the sigma-game at on  vertices only,
then we come to the
 {\em lit-only sigma-game}. That is, when $v$ is off, an {\em invalid} move at $v$
 keeps the configuration unchanged; when $v$ is on, a {\em valid} move at $v$ transforms
  the configuration $ x$ into $ x+ \chi_{ N_G(v)}$. In all, a move of the lit-only sigma-game
  at $v$ transforms the configuration $ x$ into
  $ x+x(v) \chi_{ N_G(v)}= x(I+\chi_v^\top \chi_{ N_G(v)})$,
where $I$ is the identity
  matrix.  The introduction of the lit-only restriction makes the sigma-game
    harder to analyze and leads to an even  richer mathematical structure
    \cite{CH06,CH07,Cr,EES,F01,F,F1,GK,GWW,HW,HW1,WW1,WW,WW09,WU}.

 For $x,y\in
\mathbb{F}_2^{V(G)}$, we write   $ x\to _G y$ to mean that
 $x$ can be transformed  to $ y$  by successive  moves in the sigma-game on
 $G$.
 Correspondingly, we
write $ x\xrightarrow{*}_Gy$ if we can go from $x$ to $y$ by
applying a sequence of valid moves in the lit-only  sigma-game on
$G$. Note that we often drop the subscript $G$ from the notation if
it is clear from the context.  When we consider both sigma-game and
lit-only sigma-game, we often call   the moves in the sigma-game
{\em regular
    moves} to distinguish it from the valid/invalid moves in the lit-only
    sigma-game.

The  sigma-game is invertible, namely $ x\to y$ if and only if $
y\to x$, and
 the order in which we execute the moves  is irrelevant. Indeed, $ x\to y$
if and only if $ x-y$ lies in the abelian group generated by $
\chi_{ N_G(v)}, v \in V(G)$. To study the sigma-game is just to
study the action of this abelian group on $\mathbb{F}_2^{V(G)}.$

In general, the lit-only sigma-game may be unilateral, namely $
x\xrightarrow{*}y$ does not imply
 $ y\xrightarrow{*}x$, and the order of moves is significant. In
 other words, the study of lit-only sigma-game is a study of the
 action of a semigroup, which is rarely abelian, on
 $\mathbb{F}_2^{V(G)}.$
 As a trivial example of non-invertibility, consider the graph $G$ with $E(G)=V(G)\cup {V(G)\choose
 2}$ and we can easily find that $1=\chi _{
 V(G)}\xrightarrow{*}
 0$ and that $0$ cannot go anywhere else in the lit-only sigma game.
 Clearly, the existence of loops causes the intricate issue of
 non-invertibility for the lit-only sigma game. On the other hand,
     if $G$ has no loops, then it holds for all $v\in V(G)$
 that
   $(I+\chi_v^\top \chi_{ N_G(v)})^2=I$, implying that the
     lit-only  sigma-game is invertible in this case,
     and $\{ y\in\mathbb{F}_2^{V(G)}\mid x\xrightarrow{*}y\}$
     forms an orbit under the action of the group $H$  generated by
     $\{ I+\chi_v^\top \chi_{ N_G(v)}: v\in V(G)\}$.

     We will elaborate more carefully in Section 2 on the influences of the lit-only restriction to
     the sigma-game  and present proof details in Section 3  to
     confirm
     some of those described influences.

\section{Influences of the lit-only restriction}

\begin{definition}
Let $  x\in \mathbb{F}_2^{V(G)}$.
\begin{itemize}
\item $ML_G( x)=\displaystyle\min_{ x\to _Gy}L( y)$ is called the {\em minimum light number} of
$ x$ for the sigma-game on $G$.
\item $ML^*_G( x)=\displaystyle\min_{ x\xrightarrow{*}_Gy}L( y)$ is called the {\em minimum light
 number} of $ x$ for the lit-only sigma-game on $G$.
\item $ML(G)=\displaystyle\max_{ x\in\mathbb{F}_2^n}ML_G( x)$ is called the {\em minimum light number} of
 the sigma-game on $G$.
\item
$ML^*(G)=\displaystyle\max_{ x\in\mathbb{F}_2^n}ML^*_G( x)$ is
called the {\em minimum light number} of
 the lit-only sigma-game on $G$.
\end{itemize}
\end{definition}

To understand the influences of the lit-only restriction, a basic
question to answer is the following.

\begin{problem}\label{prob1}
Suppose $ x,y\in\mathbb{F}_2^{V(G)}$, $ x\to_G y$ and $L( y)=ML_G(
x)$.
 When can we conclude that $ x\xrightarrow{*}_Gy$? How large can $ML^*_G( x)-ML_G( x)$ be? How
 large can $ML^*(G)-ML(G)$ be?
\end{problem}

It is obvious that \begin{equation}ML_G( x)\le ML^*_G( x), ML(G)\le
ML^*(G). \label{eq1}\end{equation}

We flesh out a bit the above general observation by presenting some
examples, which say that both $ML^*_G(x)-ML_G(x)$ and $ML^*(G)-ML(G)$
can be arbitrarily large and both equalities in display \eqref{eq1} hold for
infinitely many graphs.

\begin{example}\label{exam3}  \cite{GK}
Let $G=K_{m,m,m}$ be the complete tripartite graph,  namely
$V(G)=\{v_{ij}:\ i=1,2,3,j=1,2,\ldots, m\}$ and $E(G)$ consists of
all those pairs $v_{ij}v_{k\ell}$ where $i\not= k.$  Let $x$ be the
configuration of  $G$ such that $x(v_{ij})=0$ if and only if $i=1.$
Then,  we have
$$ML_G( x)=0,~~ML^*_G( x)=2m,~~ML(G)=\left\lfloor\frac{3m}{2}\right\rfloor,~~ML^*(G)=2m.$$
Note that $$ML^*(x)-ML(x)=2m=\frac{2}{3}|V(G)|,
ML^*(G)-ML(G)=\frac{m}{2}=\frac{|V(G)|}{6}.$$
\end{example}

With Example \ref{exam3} in mind, it was conjectured that if $G$ is
a graph of order $n$ with no isolated vertices, then
$ML^*(G)-ML(G)\leq \lceil \frac{n}{6}\rceil $ \cite[Conjecture
4]{GK}. The next result is a counterexample to this conjecture.

\begin{example}\label{exam4}
Let $G=K_{2m}$ be the complete graph without loops on  $2m$
vertices, and $x$ be any element of $\mathbb{F}_2^{V(G)}$ with
$L(x)=m$.    Then we have
$$ML({ x})=ML(G)=0,~~ML^*({ x})=ML^*(G)=m.$$
Hence $$ML^*(G)-ML(G)=ML^*(x)-ML(x)=m=\frac{1}{2}|V(G)|.$$
\end{example}

\begin{example}
Let $G$ be a graph with loops everywhere. As    an easy consequence
of \cite[Theorem 3]{GWW}, we know that  $ML^*_G( x)=ML_G( x)$ is
valid for any configuration $x$ of  $G$ and hence
$ML^*(G)=ML(G)$.\label{thm2}
\end{example}

Besides the above result for  graphs with loops everywhere, there
are results for trees  from  which
 one can  also see  that  the lit-only restriction  does not matter too much.
  The {\em degree} of a vertex $v$ in a
graph $G$ is defined to be the number of edges in $E(G)\setminus
V(G)$ which contains $v$ and we will use the notation $\deg_G(v)$
for it.  A vertex of degree one is said to be a  {\em leaf}.

\begin{example} \cite{WW1,WW}   Let $G$ be any tree, $G'$ be a graph obtained from
$G$ by adding some loops, and $\ell$ the number of leaves of $G$. If
$\ell\geq 2,$ then $ML(G')\leq \lfloor \ell /2\rfloor$   and
$ML^*(G)\leq \lceil \ell/2 \rceil$. Both equalities can be attained.
\label{exam5}
\end{example}

Note that in Example  \ref{exam5} we do not
 directly compare the difference between the minimum light numbers
 of the sigma-game and the lit-only sigma-game, which is an object
 of interest posed in both   \cite[\S
1.3]{GWW} and  \cite[Question 3]{GK}. Let us make the following two
conjectures regarding it here, the second of which being motivated
by Example \ref{exam4}.

\begin{conjecture}Let $G$ be obtained from a tree by adding some
loops. Then $ML^*(G)-ML(G)\in \{0, 1\}.$  \label{conj1}
\end{conjecture}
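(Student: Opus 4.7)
The plan is to build on the paper's main theorem, which asserts the pointwise bound $ML^*_G(x) \le ML_G(x) + 2$ for every configuration $x$ of a tree, and to argue that the globally extremal case of difference $2$ cannot occur. Suppose for contradiction that $ML^*(G) - ML(G) = 2$, and choose $x^*$ realising $ML^*_G(x^*) = ML^*(G)$. Combining $ML_G(x^*) \le ML(G)$ with the main theorem applied to $x^*$ forces $ML_G(x^*) = ML(G)$ and $ML^*_G(x^*) - ML_G(x^*) = 2$. Thus the conjecture reduces to the following structural claim: whenever $ML^*_G(x) - ML_G(x) = 2$ for some configuration $x$, there exists another configuration $x'$ with $ML_G(x') > ML_G(x)$, contradicting the maximality defining $ML(G)$.

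To carry out the structural step I would examine the proof of the main theorem, which must pinpoint a local obstruction forcing the pointwise gap of $2$ rather than $1$ or $0$. A natural candidate, suggested by Example~\ref{exam5}, is a star-like substructure whose leaves carry a parity pattern analogous to the one separating the $\lfloor \ell/2 \rfloor$ bound on $ML$ from the $\lceil \ell/2 \rceil$ bound on $ML^*$. Given such a local pattern in $x$, I would modify $x$ inside the offending substructure, typically by flipping the state of a carefully chosen pair of vertices, and use the coset description $x \to_G y \Longleftrightarrow x - y \in \langle \chi_{N_G(v)} : v \in V(G) \rangle$ to verify that the new coset has strictly larger minimum weight. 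Parity arguments on the leaf count, in the spirit of the dichotomy in Example~\ref{exam5}, should drive this computation.

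To pass from loopless trees to the loopful setting of the conjecture, the analogue of the main theorem must first be established. Adding loops enlarges $N_G(v)$ by $v$ itself at looped vertices and, as stressed in the paper, breaks the invertibility of the lit-only game. A reasonable route is to reduce the loopful case to the loopless one by local surgery, for example replacing each loop by a pendant leaf while tracking how configurations and reachability relations transform, and then invoke the loopless result and translate back. Example~\ref{exam5}, which already accommodates loops on the $ML$ side, suggests that such a reduction is feasible.

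The principal obstacle is the structural step of producing $x'$ with strictly larger $ML_G$-value from a tight pointwise example. Since $ML_G$ is defined as a coset-minimum over $\mathbb{F}_2^{V(G)}$, it can behave non-monotonically under small perturbations of $x$; a naive bit flip risks landing in the same coset or in one of the same minimum weight, yielding no contradiction. Closing the gap from $2$ to $1$ therefore seems to demand detailed information about the tight configurations identified in the main theorem, and it is conceivable that a more delicate parity-counting argument than the one sketched above will ultimately be required.
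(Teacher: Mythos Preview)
The statement you are trying to prove is \emph{Conjecture~\ref{conj1}} in the paper, and the paper does not prove it. It is explicitly left open: the authors only establish the weaker pointwise bound $ML^*_G(x)-ML_G(x)\le 2$ (Theorem~\ref{thm9}) and describe that, via the inequality~\eqref{eq2}, as ``a partial support to Conjecture~\ref{conj1}.'' There is therefore no paper proof to compare your proposal against.

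As a standalone attempt, your reduction step is sound: if $ML^*(G)-ML(G)=2$ then picking $x^*$ with $ML^*_G(x^*)=ML^*(G)$ does force $ML_G(x^*)=ML(G)$ and a pointwise gap of~$2$ at $x^*$. But the structural claim you then need---that any configuration achieving pointwise gap~$2$ admits some $x'$ with $ML_G(x')>ML_G(x)$---is not proved; you only sketch a hope that a parity argument on leaves ``in the spirit of Example~\ref{exam5}'' will work, and you yourself flag that small perturbations can land in the same coset or one of the same minimum weight. Nothing in the paper's analysis of the tight cases in Theorem~\ref{thm9} (the assumptions that good components are loopless singletons in identical states) immediately yields such an $x'$, and indeed Example~\ref{ex3} already shows a configuration with pointwise gap~$2$ where $ML_G(x)=0$, so at least for that instance your desired $x'$ would have to come from a genuinely different orbit, not a local tweak. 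In short, what you have is a plausible strategy for an open problem, not a proof, and the paper offers no further leverage beyond the bound of~$2$.
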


\begin{conjecture}\label{conj2} It holds for any graph $G$ that
$ML^*(G)-ML(G)\le\frac{1}{2}|V(G)|$.
\end{conjecture}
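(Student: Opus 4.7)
\noindent\textbf{Plan for Conjecture~\ref{conj2}.} A natural first attempt is to replace the global bound by a pointwise one. Since $ML(G) \ge ML_G(x)$ for every $x$, choosing $x^\star$ with $ML^*_G(x^\star) = ML^*(G)$ yields
\[
ML^*(G) - ML(G) \le ML^*_G(x^\star) - ML_G(x^\star) \le \max_{x}\bigl(ML^*_G(x) - ML_G(x)\bigr),
\]
so it would suffice to prove the pointwise bound $ML^*_G(x) - ML_G(x) \le \tfrac{1}{2}|V(G)|$. But this is too much to hope for: Example~\ref{exam3} on $K_{m,m,m}$ already supplies a configuration with pointwise gap $2m > \tfrac{3m}{2} = \tfrac{1}{2}|V(G)|$. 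So the argument must really compare the two global maxima, rather than the pointwise differences.

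Write $n = |V(G)|$ and $W = \mathrm{span}\{\chi_{N_G(v)} : v \in V(G)\} = \mathrm{im}\,M$, where $M$ is the $\mathbb{F}_2$ adjacency matrix of $G$. The conjecture is equivalent to producing \emph{some} configuration $y$ with $ML_G(y) \ge ML^*(G) - n/2$, i.e.\ a coset $y + W$ of minimum Hamming weight at least $ML^*(G) - n/2$. My plan is to start from an $x^\star$ chosen as the light-minimum representative of its own lit-only orbit (so $L(x^\star) = ML^*_G(x^\star) = ML^*(G)$) and then look for $y = x^\star + f$ with $f \in \ker M = W^{\perp}$, so that $y$ sits in a coset different from that of $x^\star$, and the task reduces to verifying $L(y + w) \ge L(x^\star) - n/2$ for every $w \in W$.

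The main obstacle is to convert the lit-only orbit-minimality of $x^\star$ into the existence of a suitable $f$. Orbit-minimality is a \emph{global} property: no sequence of valid moves from $x^\star$ can decrease $L$. It is strictly stronger than the one-step condition $d_A(v) \le d_B(v)$ for every $v \in A := \mathrm{supp}(x^\star)$ (with $B = V(G)\setminus A$), and this one-step condition alone is already insufficient --- e.g.\ on $C_6$ the configuration $\chi_{\{1,2,4,5\}}$ meets $d_A(v) = d_B(v) = 1$ at each of its four on vertices, yet two consecutive valid moves drop the light number from $4$ to $2$. A promising route in the loopless case is to use that each generator $I + \chi_v^\top\chi_{N_G(v)}$ is an involution on $\mathbb{F}_2^{V(G)}$, so that the lit-only orbit of $x^\star$ is a group orbit whose invariant bilinear forms should constrain the range of $L$ inside $x^\star + W$; combining these invariants with a covering-radius analysis of $W$, calibrated against the extremal $K_{2m}$ of Example~\ref{exam4} (where the bound is tight), should locate the desired $f$. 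This covering-radius and invariant-theoretic step is where I expect the bulk of the technical work, and is the reason the statement remains open.
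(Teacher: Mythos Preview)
The statement you are addressing is \emph{Conjecture}~\ref{conj2} in the paper, not a theorem: the paper offers no proof and explicitly leaves it open, using it only as motivation (via inequality~\eqref{eq2}) for the pointwise results on trees and related graphs. There is therefore no ``paper's own proof'' to compare against. Your proposal is likewise not a proof: you correctly reproduce inequality~\eqref{eq2}, correctly observe via Example~\ref{exam3} that the pointwise bound $ML^*_G(x)-ML_G(x)\le \tfrac{1}{2}|V(G)|$ is false in general, and then sketch a plan whose decisive step you yourself label ``the reason the statement remains open.'' So both documents agree that this is an open problem.

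Two remarks on the plan itself. First, the sentence ``$f\in\ker M=W^{\perp}$, so that $y$ sits in a coset different from that of $x^\star$'' is not quite right: over $\mathbb{F}_2$ one has $\dim W+\dim W^{\perp}=n$, but $W\cap W^{\perp}$ need not be zero, so $f\in W^{\perp}$ does not by itself force $f\notin W$; you would need to choose $f$ outside $W$ explicitly. Second, the closing paragraph (``invariant bilinear forms \ldots\ covering-radius analysis \ldots\ should locate the desired $f$'') is not yet an argument but a list of keywords; nothing there constrains $L$ on the coset $y+W$ from below, which is the entire content of the conjecture. As written, the proposal is an honest outline of why the problem is hard rather than a route to its solution, and the paper treats it the same way.
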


Suppose that $y,z$ are two configurations of a graph $G$ such that
$ML^*(G)=ML_G^*(z)$ and $ML(G)=ML_G(y)$. Since  $ML^*_G(
z)-ML_G(y)\leq ML^*_G( z)-ML_G(z)$, we infer that
\begin{equation}0\leq ML^*(G)-ML(G)\leq \max _{x\in
\mathbb{F}_2^{V(G)}}(ML^*_G( x)-ML_G( x)). \label{eq2}\end{equation}
This suggests  that, instead of tackling  Conjecture \ref{conj1}
 and/or Conjecture \ref{conj2}
 directly, we may first try to find an upper bound
for $ML^*_G( x)-ML_G( x)$   for any configuration $x$  of some
special graph $G$.

Let $G$ be a graph obtained from a tree by attaching some loops and
$x$ a configuration of $G$. According to  Example \ref{thm2},
$ML^*_G( x)-ML_G( x)$ will take value $0$ if $G$ has loops
everywhere. We proceed to give two more examples to show that it is
possible that $ML^*_G( x)-ML_G( x)$ takes value $1$ or $2$ as well.
To demonstrate a configuration, we will draw the underlying graph
and use a bullet to indicate an on vertex and use a circle for an
off vertex.

\begin{figure}[ht]
\setlength{\unitlength}{0.5cm}
\begin{center}\begin{picture}(16,3)
\put(-2,0.5){\makebox(0,0){$x:$}}
\multiput(2.2,0.5)(2,0){6}{\line(1,0){1.6}}
\multiput(4,0.5)(2,0){5}{\circle{0.4}} \put(8,0.7){\line(0,1){1.6}}
\put(2,0.5){\circle*{0.4}} \put(8,2.5){\circle{0.4}}
\put(14,0.5){\circle*{0.4}} \put(2,0){\makebox(0,0){$v_1$}}
\put(4,0){\makebox(0,0){$v_2$}} \put(6,0){\makebox(0,0){$v_3$}}
\put(8,0){\makebox(0,0){$v_4$}} \put(8,3){\makebox(0,0){$v_5$}}
\put(10,0){\makebox(0,0){$v_6$}} \put(12,0){\makebox(0,0){$v_7$}}
\put(14,0){\makebox(0,0){$v_8$}}
\end{picture}\end{center}
\caption{$ML^*(x)-ML(x)=1$}\label{fig1}
\end{figure}
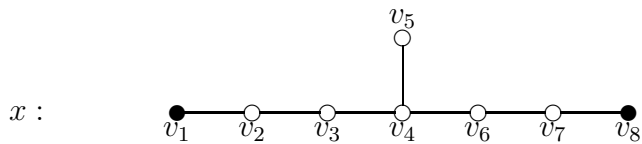

\begin{example}  Let $x$ be the configuration depicted in Fig.
\ref{fig1}. A computer search demonstrates that $ML_G(x)=1$ and
$ML^*_G(x)=2$.
\end{example}

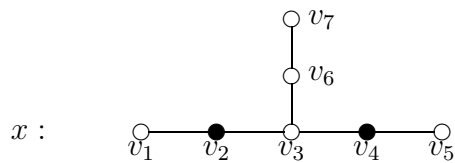
\begin{figure}[ht]
\setlength{\unitlength}{0.5cm}
\begin{center}\begin{picture}(8,4)
\put(-3,0.5){\makebox(0,0){$x:$}}
\multiput(0,0.5)(2,0){5}{\circle{0.4}}
\multiput(4,2)(0,1.5){2}{\circle{0.4}} \put(2,0.5){\circle*{0.4}}
\put(6,0.5){\circle*{0.4}}
\multiput(0.2,0.5)(2,0){4}{\line(1,0){1.6}}
\multiput(4,0.7)(0,1.5){2}{\line(0,1){1.1}}
\put(0,0){\makebox(0,0){$v_1$}} \put(2,0){\makebox(0,0){$v_2$}}
\put(4,0){\makebox(0,0){$v_3$}} \put(6,0){\makebox(0,0){$v_4$}}
\put(8,0){\makebox(0,0){$v_5$}} \put(4.8,2){\makebox(0,0){$v_6$}}
\put(4.8,3.5){\makebox(0,0){$v_7$}}
\end{picture}\end{center}
\caption{$ML^*(x)-ML(x)=2$}\label{fig2}
\end{figure}

\begin{example}\label{ex3}  Let $x$ be the  configuration depicted in Fig. \ref{fig2}. Then,
$ML(x)=0, ML^*(x)=2$ \cite[p. 299]{WW}.
\end{example}

\begin{figure}[ht]
\begin{center}\begin{picture}(5,2)
\put(2,1){\circle*{0.15}} \put(2,1){\line(2,1){1}}
\put(2,1){\line(2,-1){1}} \put(3,0.5){\line(1,0){2}}
\put(3,1.5){\line(1,0){2}} \put(3,0.5){\circle*{0.15}}
\put(3,1.5){\circle*{0.15}} \put(4,0.5){\circle*{0.15}}
\put(4,1.5){\circle*{0.15}} \put(5,0.5){\circle*{0.15}}
\put(5,1.5){\circle*{0.15}} \put(1.5,1){\circle{2}}
\put(0,1){\makebox(0,0){$G:$}} \put(1.2,1){\makebox(0,0){$G_1$}}
\put(1.7,1){\makebox(0,0){$\cdots$}}
\put(1.7,1.2){\makebox(0,0){$\mathinner{\mkern1mu\raise7pt\hbox{.}\mkern2mu\raise4pt\hbox{.}\mkern2mu\raise1pt\hbox{.}\mkern1mu}$}}
\put(1.7,0.8){\makebox(0,0){$\mathinner{\mkern1mu\raise1pt\hbox{.}\mkern2mu\raise4pt\hbox{.}\mkern2mu\raise7pt\hbox{.}\mkern1mu}$}}
\put(2,0.7){\makebox(0,0){$v$}} \put(3,1.8){\makebox(0,0){$v_{11}$}}
\put(3,0.2){\makebox(0,0){$v_{21}$}}
\put(4,1.8){\makebox(0,0){$\cdots$}}
\put(4,0.2){\makebox(0,0){$\cdots$}}
\put(5,1.8){\makebox(0,0){$v_{1n_1}$}}
\put(5,0.2){\makebox(0,0){$v_{2n_2}$}}
\end{picture}\end{center}
\caption{A graph with two pendant paths}\label{fig3}
\end{figure}
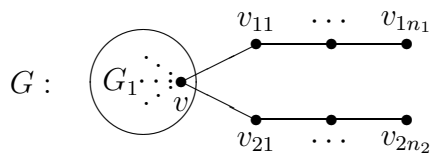

The main idea of   our current approach towards  tackling Problem
\ref{prob1} is reflected in the result below.

\begin{theorem}\label{thm8} Let $G_1$  be a connected   graph with  $v\in V(G_1)$. Let $G$
be the graph obtained from $G_1$ by adding a set of new vertices
$S$$=$$\{$$v_{11},$$ \ldots,$$ v_{1n_1},$$v_{21},$$\ldots,$$
v_{2n_2}\}$,   $n_1,$$n_2\ge 1,$ and adding a set of new edges
$\{$$vv_{11},$$
vv_{21},$$v_{11}$$v_{12},$$\ldots,$$v_{1,n_1-1}v_{1n_1},$$v_{21}v_{22},\ldots,v_{2,n_2-1}v_{2n_2}\}$
as well as some loops inside $S$; see Fig. \ref{fig3}. For any
$x\in\mathbb{F}_2^{V(G)}$, we will have  $ML^*_G( x)-ML_G( x)\le 2$
provided either
 $\max (n_1,n_2)\ge 2$ or $G$
has a loop at either $v_{11}$ or $v_{21}$, or
$x(v_{11})\not=x(v_{21})$.
\end{theorem}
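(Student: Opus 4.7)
The plan is as follows. Fix a regular-move sequence realizing $x \to_G y$ with $L(y) = ML_G(x)$, and let $m\in \mathbb{F}_2^{V(G)}$ be its parity multiset, so $y = x + \sum_u m(u)\chi_{N_G(u)}$. The goal is to exhibit a lit-only sequence $x \xrightarrow{*}_G y'$ with $L(y') \le L(y)+2$, by simulating the regular-move effect of $m$ while charging a residual cost of at most two lit vertices to the pendant set $S$.

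I would first split the simulation into two parts. Since the regular moves at vertices of $S$ affect only $S\cup\{v\}$, the effect of $m$ on $V(G_1)\setminus\{v\}$ is determined by $m|_{V(G_1)}$, so it suffices first to carry out the interior moves (those on $V(G_1)$) as valid moves, and then clean up the configuration on $S$ by a further lit-only sequence. The critical enabling step is a \emph{Lighting Lemma}: under any of the three hypotheses, there is a lit-only sequence $\alpha$ starting from $x$ that leaves $V(G_1)\setminus\{v\}$ untouched, lights $v$, and increases the light count on $S$ by only a bounded amount. Each hypothesis supplies a distinct mechanism for $\alpha$. Under Condition~(A), $\max(n_1,n_2)\ge 2$, we can slide a pulse down a lit interior of a path to $v$ by toggling lit path-internal vertices and adjusting parities with a single tail light. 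Under Condition~(B), a loop at $v_{11}$ (or $v_{21}$) lets a valid fire at that leaf toggle both $v$ and the leaf itself, giving the extra degree of freedom needed to light $v$ without extinguishing the firing site. Under Condition~(C), the asymmetry $x(v_{11})\ne x(v_{21})$ provides an already-lit neighbor of $v$ whose fire toggles $v$ directly, after which cascades into $G_1$ are available.

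Once $v$ is lit, the second phase propagates light into $G_1$: by connectedness of $G_1$ and invertibility of the lit-only game on loop-free induced subgraphs, one cascades valid moves to light every vertex of $V(G_1)$ that $m$ asks to fire, executes these moves, and returns the configuration on $V(G_1)$ to its target $y$-values. In a third phase one uses the hypothesis-dependent mechanism on $S$ a second time in reverse to adjust the pendant paths to agree with $y$ up to at most two extra lit vertices. The heuristic is that along each path any parity discrepancy left by the earlier simulation can be absorbed into a single residual lit vertex near its leaf end, so the net overhead is at most $1$ per path, and at most $2$ in total.

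The main obstacle is the Lighting Lemma together with the residue bookkeeping in the clean-up phase: one must certify that across all three regimes the combined overhead of lighting $v$, propagating into $G_1$, and tidying the paths never exceeds $2$. The tightest sub-case is Condition~(C) with $n_1=n_2=1$ and no loops, where the paths have no interior and the only flexibility is the initial asymmetry $x(v_{11})\ne x(v_{21})$; I would handle this by an explicit short protocol that fires at the lit leaf, propagates through $G_1$, and finishes with a return sweep that reduces the problem to the sigma-game on $G_1$ plus a controllable $+2$ residue. In the remaining sub-cases (longer paths, or a loop at $v_{11}$ or $v_{21}$) there is enough slack to route a pulse to $v$ and back with the stated excess, and the bound $+2$ will follow from case analysis along the two pendant paths, analogous in spirit to the tree-bound $ML^*(G)\le\lceil \ell/2\rceil$ of Example~\ref{exam5}.
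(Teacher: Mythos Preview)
Your Lighting Lemma is false as stated, and this is where the proposal breaks. You assert that under any of the three hypotheses there is a lit-only sequence $\alpha$ that leaves $V(G_1)\setminus\{v\}$ untouched and lights $v$; since a move at $v$ itself would disturb $N_G(v)\cap V(G_1)$, $\alpha$ must consist of moves at vertices of $S$ only. But take any $x$ with $x|_{S\cup\{v\}}\equiv 0$ and $x$ nonzero somewhere deep inside $G_1$: no move in $S$ is valid, so no such $\alpha$ exists. This situation is fully compatible with Condition~(A) (e.g.\ $n_1=2$, no loops) and with Condition~(B) (loop at $v_{11}$), so the lemma fails in exactly the regimes you invoke it. Your Phase~2 is also not yet a proof: ``light every vertex that $m$ asks to fire, execute, and return'' ignores that each fire changes the states of neighboring vertices you may still need to fire, and the appeal to invertibility on loop-free induced subgraphs does not apply, since $G_1$ may have loops.

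The paper organizes the argument differently and avoids both difficulties. The anchors are $a=v_{11}$ and $b=v_{21}$ (not $v$), with common neighbor $c=v$. The three hypotheses are used solely to arrange, by lit-only moves, that $a$ and $b$ have \emph{different} states: Conditions~(A) and~(B) force $N_G(a)\neq N_G(b)$, and then Lemma~\ref{lem14} walks a pulse from \emph{any} lit vertex of the connected graph $G$ (possibly from inside $G_1$) to a neighbor of $a$ outside $N_G(b)$; Condition~(C) gives the asymmetry immediately. With that asymmetry as an invariant, Lemma~\ref{lem15} simulates every required regular move inside $V(G_1)$ by walking from the currently lit anchor through $c=v$ to the target and firing along the way, treating the farthest target first; the outcome is $x\xrightarrow{*}_G z=y+\sum_{w\in R}\chi_{N(w)}$ with $R\subseteq S$, so $z$ already matches $y$ on $V(G_1)\setminus\{v\}$. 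The pendant clean-up (Lemma~\ref{lem16}) on each of the two paths then gives the $+2$. The idea you are missing is that the driving invariant is the asymmetry at $\{v_{11},v_{21}\}$, not the state of $v$, and that this asymmetry can always be created because light may be imported from $G_1$ rather than manufactured inside $S$.
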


In light of  \eqref{eq2}, what comes next may be viewed as a partial
support to Conjecture \ref{conj1}.

\begin{theorem}\label{thm9}
If $G$ is obtained from a tree by adding some loops, then it holds
$ML^*_G( x)-ML_G( x)\le 2$  for any  $ x\in\mathbb{F}_2^{V(G)}$ and
the upper bound $2$ is sharp.
\end{theorem}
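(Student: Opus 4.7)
The plan is to deduce the bound $ML^*_G(x)-ML_G(x)\le 2$ from Theorem \ref{thm8} by a structural case analysis of the tree $T$ underlying $G$; sharpness is witnessed by Example \ref{ex3}, which already realizes difference exactly $2$.

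The small cases $|V(T)|\le 3$ are dispatched by direct inspection of the at most $2^3=8$ configurations, since orbits in the lit-only game on such small graphs are easy to enumerate and one verifies $ML^*_G(x)-ML_G(x)\le 1$ throughout. For $|V(T)|\ge 4$ the strategy is to exhibit a vertex $v\in V(T)$ together with two vertex-disjoint pendant paths of $T$ meeting only at $v$, so that the resulting decomposition of $G$ fits the template of Theorem \ref{thm8} and one of its three sufficient conditions holds for the given $x$. Applying Theorem \ref{thm8} to this decomposition then yields the inequality immediately.

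I would locate $v$ by splitting into two canonical shapes. If $T$ is itself a path $u_0 u_1\cdots u_n$ with $n\ge 3$, take $v=u_1$, let $G_1$ consist of the single vertex $v$ (with its loop, if present), and use the two pendant paths $u_0$ and $u_2 u_3\cdots u_n$; the latter has length $n-1\ge 2$, so the hypothesis $\max(n_1,n_2)\ge 2$ of Theorem \ref{thm8} is satisfied. If $T$ has a vertex of degree at least $3$, consider the \emph{skeleton} $T''$ of $T$ obtained by repeatedly pruning every maximal pendant path; the leaves of $T''$ are exactly branch vertices of $T$ from which at least two pendant paths of $T$ emanate. At such a branch vertex $v$ there is some freedom in choosing the two pendant paths, which I exercise greedily: pick a pendant path of length $\ge 2$ if one exists, otherwise pick a pendant path whose terminal leaf carries a loop if one exists, otherwise pick two adjacent leaves whose $x$-values differ if possible.

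The main obstacle I anticipate is the exceptional residual configuration in which every pendant path at the chosen $v$ has length $1$, no adjacent leaf of $v$ carries a loop, and all adjacent leaves of $v$ share a common $x$-value; here each of the three hypotheses of Theorem \ref{thm8} simultaneously fails on every candidate pair at this $v$. I expect this residual case to require a direct construction exploiting the symmetry among the identically-lit adjacent leaves at $v$, producing an explicit valid-move sequence that brings $x$ to a configuration with at most $ML_G(x)+2$ lit vertices, possibly after rerouting the argument to another branch vertex of $T''$. Combined with Example \ref{ex3}, which already achieves $ML^*_G(x)-ML_G(x)=2$ on a small tree, this completes the proof and confirms that the bound $2$ is best possible.
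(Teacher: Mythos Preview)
Your reduction via Theorem \ref{thm8} is precisely the paper's opening move, and you correctly isolate the residual configuration in which, at the chosen appropriate vertex $v$, every pendant path is a single loopless leaf and all these leaves carry the same $x$-value. The gap is that this residual case is not a loose end to be tidied up---it is where essentially all of the work lies, and neither of your two proposed escapes closes it. Rerouting to another leaf of $T''$ fails because the obstruction can hold simultaneously at \emph{every} appropriate vertex: take, for instance, two adjacent branch vertices $u,v$, each carrying exactly two pendant leaves, no loops anywhere, and $x$ assigning the same state to both leaves at $u$ and likewise at $v$; there is then no vertex to reroute to. And ``a direct construction exploiting symmetry'' is not a proof---in particular, symmetry among identically-lit leaves at $v$ gives you no handle on how to get within $2$ of $ML_G(x)$, since you have no control over the regular moves needed elsewhere in the tree.

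The paper handles exactly this residual situation with two tools you do not invoke. First, a lemma on rakes (Lemma \ref{lem20}): on a rake with designated top, any sequence of regular moves among the common vertices can be replaced by a sequence of valid moves at the cost of at most one extra lit vertex; this immediately disposes of the case of at most one branch vertex (Remark \ref{re24}). Second, the key transfer lemma (Lemma \ref{lem1516}): given $a,b$ with $ab\notin E(G)$, $N_G(a)\ne N_G(b)$, a common neighbor $c$, and $S\subseteq V(G)\setminus(N_G(a)\cup N_G(b))$ with $G[S\cup\{c\}]$ connected, one can reach from $x$ by valid moves a configuration of the form $y+\sum_{v\in R}\chi_{N(v)}$ with $R\subseteq V(G)\setminus(S\cup\{c\})$. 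When $G$ has at least two branch vertices, the paper deletes all but one good singleton at each appropriate vertex to form a subtree $H$, applies Lemma \ref{lem17} to $H$ to find a path through two leaves of $H$ and at most one branch vertex of $H$, then uses Lemma \ref{lem1516} to push all outstanding regular moves into the two pendant pieces hanging off that path, and finishes with Lemma \ref{lem20} applied to each piece (each being a rake with top at the branch vertex). Your proposal reaches the threshold of this argument but stops there; to complete it you would need to supply something equivalent to Lemmas \ref{lem1516} and \ref{lem20}.
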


Let $G$ be obtained from a tree by adding some loops and let $x\in
\mathbb{F}_2^{V(G)}.$ We point out that Mu Li designed a polynomial
algorithm to derive $ML_G( x)$  \cite{LI} and some relevant
 complexity results for determining $ML_G(x)$ for tree-like graphs $G$ can be found in \cite{Gassner,Meu}. For our present purposes, it is pertinent to
ask what is the complexity of calculating $ML^*_G( x)$.

For any graph $G$ and $S\subseteq V(G)$, the subgraph of $G$ induced
by $S$ is denoted $G[S]$.  We say that $v$ is a {\em branch vertex}
of $G$ if $\deg_G(v)\geq 3.$  Going through the procedure of
establishing Theorems \ref{thm8} and \ref{thm9}, it will be not hard
to realize the following result about a graph obtained by `planting'
a tree with at least 3 branch vertices on a connected graph.  Note
that the graph $G$  treated in Theorem \ref{thm8} can be viewed as a
graph obtained by `planting' a path on the connected graph  $G_1.$

\begin{theorem}\label{thm10}
Let $G$ be a  graph with $V(G)=V_1\cup V_2$ and $V_1\cap V_2=\{v\}$.
  Suppose that $G[V_1]$ is a tree with some loops attached and contains at least three branched vertices of itself
   and  $G[V_2]$  is connected.
    If there is no edge in $G$  which intersects both $V_1\setminus
    \{v\}$  and $V_2\setminus
    \{v\}$, then it holds  $ML^*_G( x)-ML_G( x)\le 2$  for any  $
    x\in\mathbb{F}_2^{V(G)}$.
\end{theorem}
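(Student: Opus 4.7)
The plan is to reduce Theorem~\ref{thm10} to Theorem~\ref{thm8} by exhibiting, inside $G$, an attaching vertex $u$ with two pendant paths whose local data satisfies at least one of Theorem~\ref{thm8}'s three alternative hypotheses. Because $G[V_1]$ is a tree meeting $G[V_2]$ only at $v$, any pair of pendant paths lying entirely in $V_1\setminus\{v\}$ yields a decomposition $G=G_1\cup(P_1\cup P_2)$ with $G_1$ connected — the remaining part of the tree, being a subtree that still contains $v$, glues to the connected graph $G[V_2]$ at $v$ and stays connected.

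To locate $u$, regard $T:=G[V_1]$ as a tree (ignoring loops) rooted at $v$ and pick a branch vertex $u$ of $T$ at maximum depth. Every proper descendant of $u$ then has degree at most two in $T$, so each maximal subtree hanging below $u$ is a path. Since $\deg_T(u)\ge 3$, at least two such pendant paths are attached at $u$: at least three if $u=v$, and at least two otherwise, since in that case $u$ has one parent direction (leading toward the remaining branch vertices) plus $\ge 2$ children. Designating two of them as $P_1, P_2$ with $n_1, n_2\ge 1$ vertices respectively and setting $G_1:=G\setminus(V(P_1)\cup V(P_2))$, an application of Theorem~\ref{thm8}, with $u$ playing the role of its vertex $v$, delivers $ML^*_G(x)-ML_G(x)\le 2$ provided one of its three alternatives holds for $(P_1,P_2)$: $\max(n_1,n_2)\ge 2$; a loop at the vertex of $P_1$ or $P_2$ adjacent to $u$; or differing $x$-values at those two vertices.

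The main obstacle is the residual case in which no candidate pair of pendant paths at the chosen $u$ meets any of these alternatives — equivalently, every pendant path at $u$ is a loopless single leaf and all such leaves share a common $x$-state. Here the hypothesis of at least three branch vertices in $G[V_1]$ should furnish the flexibility to escape: if $u$ itself has a third downward pendant path whose leaf carries a different state, rotate the pair; otherwise, reselect $u$ among the remaining branch vertices of $T$ and re-examine the local data. The most delicate sub-case — in which \emph{every} branch vertex of $T$ is surrounded exclusively by monochromatic loopless length-one leaves — should be dispatched by a direct ad hoc argument exploiting the three or more branch vertices strung along a common skeleton in $T$ to produce, by hand, a valid-move sequence witnessing $ML^*_G(x)-ML_G(x)\le 2$. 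This residual bookkeeping is where I expect the main technical work to lie; the overall reduction to Theorem~\ref{thm8} otherwise follows the procedure already established in the proofs of Theorems~\ref{thm8} and~\ref{thm9}.
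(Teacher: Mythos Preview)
Your overall direction is right—locate pendant paths inside $V_1\setminus\{v\}$ and invoke Theorem~\ref{thm8}—but the proposal leaves the decisive case unresolved. The ``residual case'' you identify, in which every candidate pair of pendant paths at every eligible branch vertex consists of monochromatic loopless single leaves, is precisely the substance of the argument, and you defer it to an unspecified ``direct ad hoc argument.'' Your suggested fallback of ``reselecting $u$ among the remaining branch vertices'' does not work as written: a branch vertex that is not of maximal depth has subtrees containing further branch vertices, so it has no pendant paths on those sides, and you are back where you started.

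The paper's proof shows that no ad hoc work is needed. One follows the proof of Theorem~\ref{thm9} verbatim, the only change being to replace Lemma~\ref{lem17} by Remark~\ref{re21}. Concretely: after the Theorem~\ref{thm8} reduction (your residual case) one forms the auxiliary tree $H$ from $G[V_1]$ exactly as in the proof of Theorem~\ref{thm9}; the hypothesis that $G[V_1]$ has at least three branch vertices guarantees that $H$ has at least three leaves, so Remark~\ref{re21} furnishes a path $P$ in $H$ with two leaf endpoints, at most one branch vertex, and \emph{avoiding} $v$. With $P\subseteq V_1\setminus\{v\}$, the remainder $G_1$ still contains $v$ and hence all of $G[V_2]$, so $G_1$ is connected; the two subcases of Theorem~\ref{thm9}'s proof (using Lemmas~\ref{lem1516} and~\ref{lem20}) then go through unchanged. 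Thus the three-branch-vertex hypothesis is exactly what licenses the use of Remark~\ref{re21} in place of Lemma~\ref{lem17}, and that single substitution disposes of your residual case systematically.
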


Employing similar idea as in the proof of preceding theorems
 but with more technical details, we can carry out the
proof of the following two results, which will be reported in a
subsequent paper \cite{WW09}.

\begin{theorem}\label{thm4}
If $G$ is unicyclic, then it holds $ML^*_G( x)-ML_G( x)\le 3$ for
any $ x\in\mathbb{F}_2^{V(G)}$. This bound is tight.
\end{theorem}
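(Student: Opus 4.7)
The plan is to adapt the techniques developed for Theorems \ref{thm8}, \ref{thm9}, and \ref{thm10} to the unicyclic setting, with extra care for the single cycle. Let $G$ be a unicyclic graph with unique cycle $C = v_1 v_2 \cdots v_k v_1$, and decompose $G$ as $C$ together with pendant subtrees $T_1, \ldots, T_k$, where $T_i$ is the maximal subtree of $G$ that contains $v_i$ but no other vertex of $C$. Given a starting configuration $x$ and a regular-move target $y$ with $L(y) = ML_G(x)$, the goal is to reach a configuration with light number at most $L(y) + 3$ via a sequence of valid moves.

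First I would process each pendant subtree $T_i$ using the valid-move technique of Theorem \ref{thm9}. From the viewpoint of valid moves, each $T_i$ behaves like a tree with a single distinguished vertex $v_i$ that interacts with the rest of $G$. Iteratively reducing the tree pieces, one at a time, transforms $x$ via valid moves on $G$ into a configuration that agrees with $y$ outside of $C$, modulo a bounded number of lit vertices inside the $T_i$'s. By the Theorem \ref{thm9} analysis, this step costs at most $2$ extra lit vertices.

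Second, after the pendant trees have been processed, the remaining discrepancy with $y$ is supported on the cycle $C$. To simulate the cycle portion of the optimal regular move sequence using valid moves, I would cut an edge $e = v_k v_1$ of $C$ to obtain a path $P = v_1 v_2 \cdots v_k$ and apply the pendant-path argument behind Theorem \ref{thm8} to $P$, then account for reinstating $e$. The edge $e$ couples only $v_1$ and $v_k$, so its contribution to the final configuration can be absorbed by toggling at most a single boundary vertex, costing at most $1$ additional lit vertex. Combined with the tree-processing step, this yields $ML^*_G(x) - ML_G(x) \le 2 + 1 = 3$.

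For tightness I would build on Example \ref{ex3}: start with the spider configuration in Fig.\ \ref{fig2}, close one of its paths into a cycle by adding a single edge (for instance $v_1 v_7$ or $v_5 v_7$), and verify by direct case analysis that no valid move sequence can reduce the light number below $ML_G(x) + 3$. The main obstacle will be the cycle step: cycles support parity-type invariants of the sigma-game that trees lack, and aligning the valid move sequence on $P$ with the cycle-closing edge $e$ so that no more than one extra vertex becomes lit — across every possible parity pattern along $C$ — is the combinatorial heart of the argument and is precisely what accounts for the $+1$ penalty beyond the tree case.
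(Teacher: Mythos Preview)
The paper does not actually prove Theorem~\ref{thm4}; it explicitly announces that the proof ``will be reported in a subsequent paper~\cite{WW09}.'' So there is no in-paper argument to compare your proposal against. That said, your sketch has real gaps that would need to be closed before it could count as a proof.

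The most serious problem is the cost accounting in your first step. You assert that processing \emph{all} the pendant trees $T_1,\ldots,T_k$ via the Theorem~\ref{thm9} technique costs at most $2$ extra lit vertices in total. But the machinery behind Theorem~\ref{thm9} (Lemmas~\ref{lem1516} and~\ref{lem20}) is applied once, to a single carefully chosen pair of pendant rakes meeting at a common appropriate vertex; it does not iterate over an arbitrary collection of subtrees with a uniform $+2$ bound. Nothing in your outline explains why the surplus does not accumulate as you move from one $T_i$ to the next. Likewise, in your second step, ``cutting'' the edge $e=v_kv_1$ and applying a path argument is not innocuous: a valid move at $v_1$ or $v_k$ in $G$ toggles the neighbour across $e$, whereas the same move in $G\setminus e$ does not. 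You need to track exactly how these discrepancies propagate along the entire valid-move sequence, not just assert that they can be absorbed at the end by ``at most a single boundary vertex.''

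Your tightness construction is also only a hope at this point. Example~\ref{ex3} has $ML^*(x)-ML(x)=2$; adding an edge to close a cycle will in general change \emph{both} $ML(x)$ and $ML^*(x)$, and you give no argument that the gap becomes exactly~$3$. A genuine tightness witness for the unicyclic case has to be exhibited and verified, not inferred from the tree example.
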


\begin{theorem}\label{thm5}
If $G$ is obtained from a grid graph by adding some loops, then
$ML^*_G( x)-ML_G( x)\le 3$ for any configuration $x$.
\end{theorem}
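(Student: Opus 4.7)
The plan is to adapt the inductive template behind Theorems~\ref{thm8}--\ref{thm10} to the grid setting, with the slightly weaker bound $3$ (rather than $2$) accounting for the extra flexibility required to navigate the grid's many cycles. Fix an $m\times n$ grid graph $G$ with some loops added, let $x\in\mathbb{F}_2^{V(G)}$, and pick a witness $y$ with $x\to_G y$ and $L(y)=ML_G(x)$. The goal is to produce a sequence of valid moves from $x$ to some $z$ with $L(z)\le L(y)+3$.

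First I would dispose of loops. By the reasoning underlying Example~\ref{thm2}, firing at a looped vertex toggles the vertex itself off as well, so loops can only help the lit-only player; a short case analysis lets us reduce to the essentially loopless grid and plug the loop case back at the end. Next, I would set up an induction on $\min(m,n)$. The base case $\min(m,n)=1$ degenerates $G$ to a path, which is a tree, so Theorem~\ref{thm9} already yields the sharper bound $2$. For $\min(m,n)\ge 2$, choose a boundary row $R$; then $G'=G\setminus R$ is a strictly smaller grid. The multiset of regular moves that take $x$ to $y$ can be partitioned into moves inside $R$ and moves inside $G'$ (modulo adjusting $y$ by an element of the kernel). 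Applying the inductive hypothesis to the restriction of the game to $G'$ yields a valid sequence on $G'$ obeying the bound $3$, and a Theorem~\ref{thm8}-style analysis of the pendant structure along $R$ then controls the residual contribution coming from the boundary row.

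The central difficulty, and the reason the bound jumps from $2$ (trees) to $3$ (grids), is the interaction of $R$ with $G'$ through the $n$ edges joining them. Unlike a pendant path attached at a single vertex, the row $R$ is glued to $G'$ along a long portion of its length, so firing inside $G'$ changes states on the $R$-vertices adjacent to $G'$ and conversely. The crucial technical step is an \emph{activation lemma} asserting that any targeted vertex on the shared boundary can be lit by a sequence of valid moves while incurring at most one additional persistent on vertex elsewhere in $G$. Establishing this lemma---most plausibly by shuttling a ``token'' of light around well-chosen $4$-cycles of the grid to avoid getting stuck---is the main obstacle, and it is exactly the sort of additional technical work that the authors indicate will be carried out in \cite{WW09}.
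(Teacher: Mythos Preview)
The paper does not prove Theorem~\ref{thm5}: immediately before the statement the authors announce that the argument ``will be reported in a subsequent paper~\cite{WW09},'' offering only the hint that it ``employ[s] similar idea as in the proof of preceding theorems but with more technical details.'' There is thus no proof here to compare against directly; the only guidance is that the intended route presumably extends the machinery of Lemmas~\ref{lem14}--\ref{lem20} (locate a suitable pair $a,b$ with a common neighbour $c$, invoke Lemma~\ref{lem1516} to simulate by valid moves all regular moves outside a small residual region, then clean up that region by an analogue of Lemma~\ref{lem20}) rather than an induction on the grid dimensions.

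Your proposal is an outline whose decisive step is explicitly left open, and two of its reductions do not work as stated. First, the loop reduction is unjustified: the assertion that ``loops can only help the lit-only player'' says nothing about the \emph{difference} $ML^*_G(x)-ML_G(x)$, since deleting loops alters both $ML_G$ and $ML^*_G$, and Example~\ref{thm2} covers only the all-loops extreme. Second, the row-removal induction conflates two different games: the inductive hypothesis concerns the lit-only game \emph{on the smaller grid} $G'$, whereas what you need is control over valid moves at vertices of $G'$ \emph{within the game on $G$}, and these differ precisely because moves in the row adjacent to $R$ toggle states in $R$. You acknowledge this and posit an ``activation lemma'' to repair the seam, but then concede that establishing it ``is the main obstacle'' and is ``exactly the sort of additional technical work that the authors indicate will be carried out in~\cite{WW09}.'' In other words, your proposal defers the crux to the same future paper the authors do, so nothing has actually been proved.
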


 Theorems  \ref{thm9},
\ref{thm4} and \ref{thm5} stimulate us to set forth the following
conjecture.

\begin{conjecture} For any graph $G$ and any $
x\in\mathbb{F}_2^{V(G)}$, it holds  $ML^*_G( x)-ML_G(
x)\le\max\limits_{v\in V(G)}\deg_G(v)$.
\end{conjecture}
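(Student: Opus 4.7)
The plan is to extend the strategy behind Theorems \ref{thm9}, \ref{thm4}, and \ref{thm5}: fix a regular-move sequence $x\to_G y$ with $L(y)=ML_G(x)$ and simulate it by a lit-only sequence while paying a bounded extra cost. Writing $y=x+\sum_{v\in T}\chi_{N_G(v)}$ for some multiset $T$ on $V(G)$, the goal is to produce an ordering of these moves, together with some auxiliary lit-only moves, so that every move in the resulting sequence is valid, i.e., applied at a currently on vertex. The final configuration $y^*$ need not equal $y$, but should satisfy $L(y^*)\le L(y)+\Delta(G)$, where $\Delta(G)=\max_{v\in V(G)}\deg_G(v)$.

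The core step would be a greedy argument: at each stage pick a vertex $v\in T$ that is currently on and has not yet been used, apply the lit-only move at $v$, and remove an occurrence of $v$ from $T$. When no on vertex of $T$ remains, insert an auxiliary lit-only move at some nearby on vertex $u$, at a cost of at most $\Delta(G)$ state changes, in order to light a vertex of $T$ and continue. The key is to control the cumulative effect of such auxiliary moves, so that at termination the symmetric difference between the current configuration and $y$ is supported on at most $\Delta(G)$ vertices. A natural first reduction is to spanning-subgraph arguments: fix a spanning tree and invoke Theorem \ref{thm9} to handle the tree backbone, then treat each non-tree edge as a local correction of cost at most $1$ per incidence at the maximum-degree vertex.

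The main obstacle is amortizing the cost of the auxiliary moves in the presence of cycles and long chains of off vertices. In the tree case, the absence of cycles permits a leaf-based induction (Theorem \ref{thm9}); in the unicyclic and grid cases, the specialized analyses of \cite{WW09} suffice. For a general graph one would presumably need a potential function $\phi$ defined on pairs (current configuration, remaining set of moves) that weakly decreases along each greedy step and grows by a controlled amount along each auxiliary move, with the total excess from $L(y)$ to $L(y^*)$ provably bounded by $\Delta(G)$. Designing such a $\phi$ and showing that the greedy strategy can always either make progress or trigger a cheap auxiliary step appears to be the central difficulty, since the interaction of lit-only moves across cycles is non-local and non-commutative. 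The fact that the conjectured bound is exactly $\Delta(G)$, rather than a weaker quantity like $2\Delta(G)$ or $|V(G)|/2$, strongly suggests that any successful argument must exploit structural features tied to the maximum-degree vertex in a rather delicate way, and the examples of $K_{2m}$ and $K_{m,m,m}$ indicate that the bound is not loose enough to allow crude amortization.
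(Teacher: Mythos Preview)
The statement you are attempting is labeled a \emph{Conjecture} in the paper; the authors do not prove it and do not claim to. It is offered only as a speculation motivated by the tree, unicyclic, and grid results. There is therefore no ``paper's proof'' to compare against, and any complete argument you produced would be new mathematics.

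Your proposal, however, is not a proof but a research plan, and you essentially say so yourself: you write that designing the potential function $\phi$ and showing the greedy/auxiliary scheme terminates with excess at most $\Delta(G)$ ``appears to be the central difficulty''. That is the whole content of the conjecture; nothing in the write-up resolves it. Concretely: (i) the greedy step ``pick $v\in T$ that is currently on'' need not exist, and when it does not, you have no bound on how many auxiliary moves are needed or on their cumulative effect; (ii) the suggested spanning-tree reduction is broken as stated, because a lit-only move at $v$ in $G$ toggles $N_G(v)$, not $N_T(v)$, so Theorem~\ref{thm9} for the spanning tree $T$ says nothing about lit-only reachability in $G$; (iii) even if one could somehow pass to $T$, the ``local correction of cost at most $1$ per incidence at the maximum-degree vertex'' is asserted without justification and is not obviously true, since corrections at non-tree edges interact with each other and with the tree simulation in a non-commuting way. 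In short, the proposal identifies a plausible line of attack but supplies none of the arguments that would be required to carry it out; as written it is a heuristic discussion, not a proof.
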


\section{Proofs}

This concluding section is devoted to  proofs of  Theorems
\ref{thm8},    \ref{thm9} and \ref{thm10}.

\begin{lemma}    Let $G$ be a connected graph and $0\not=x \in \mathbb{F}_2^{V(G)}$.
Suppose $a$ and $b$ are two vertices of $G$ satisfying $N_G(a)\not=
N_G(b)$.    Then, there is $y\in \mathbb{F}_2^{V(G)}$ such that
 $y(a)\not= y(b)$ and $
x\xrightarrow{*}_Gy$.\label{lem14}\end{lemma}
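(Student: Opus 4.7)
The plan is to first reduce the lemma to an auxiliary propagation claim: for any nonzero $x$, the set $S=\{v\in V(G):\exists\,y\text{ with }x\xrightarrow{*}_G y\text{ and }y(v)=1\}$ of vertices that can be lit through a sequence of valid moves starting from $x$ is all of $V(G)$. Once this is in hand, the lemma follows quickly: choose $w$ in the symmetric difference $N_G(a)\triangle N_G(b)$, which is nonempty by hypothesis; WLOG $w\in N_G(a)\setminus N_G(b)$. By the auxiliary claim, some $y'$ reachable from $x$ has $y'(w)=1$. If $y'(a)\neq y'(b)$ we are already done with $y=y'$; otherwise the valid move at $w$ produces a configuration $y$ with $y(a)=y'(a)+1$ and $y(b)=y'(b)$, hence $y(a)\neq y(b)$. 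A small sanity check will confirm this still works when $w$ coincides with $a$ or $b$ (using whether $w$ carries a loop or not), so no separate case analysis is needed beyond the bookkeeping of $[a\in N_G(w)]$ and $[b\in N_G(w)]$.

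The bulk of the work is thus the propagation claim, and this is where I expect the only real subtlety. The approach is a one-step spreading argument: $S$ is nonempty since $x\neq 0$, and I will show that whenever $v\in S$, every proper neighbor $u$ of $v$ (i.e.\ every $u\in N_G(v)\setminus\{v\}$) also lies in $S$. Indeed, fix $y$ reachable from $x$ with $y(v)=1$; if already $y(u)=1$ then $u\in S$, and otherwise a valid move at $v$ (legal because $y(v)=1$) toggles $u$ and so produces a reachable configuration in which $u$ is on. Because $G$ is connected, iterating this closure property starting from any element of $S$ reaches every vertex, giving $S=V(G)$.

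The mild point to be careful about is that loops do not contribute to connectivity, so I phrase the closure in terms of \emph{proper} neighbors rather than neighbors; this is why I isolate the case $u=v$ in the toggling computation. The degenerate case $|V(G)|=1$ is handled trivially since then $N_G(a)=N_G(b)$ and the hypothesis is vacuous. Apart from this loop-versus-proper-edge distinction, no other obstacles arise, and the total argument is short.
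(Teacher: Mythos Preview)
Your proposal is correct and follows essentially the same approach as the paper. Both reduce to showing that any chosen vertex $w\in N_G(a)\setminus N_G(b)$ can be lit by valid moves starting from $x$, and then use a single valid move at $w$ to separate the states of $a$ and $b$; the only cosmetic difference is that the paper constructs the propagation explicitly along a shortest path from an initially on vertex to $w$, whereas you phrase the same one-step spreading as a closure property of the set $S$.
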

\begin{proof} Without loss of generality, assume that there is $c\in N(a)\setminus
N(b)$.
 If $x(c)=1$, then a  valid move at $c$, if necessary,  will bring us to a
configuration where the states of $a$ and $b$ are different.
Consequently, it remains  to show that there is $z\in
\mathbb{F}_2^{V(G)}$ such that
 $z(c)=1$ and $
x\xrightarrow{*}_Gz$.

Since $0\not= x,$ we can take $d\in V(G)$ such that $x(d)=1.$ Choose
a shortest path connecting $d$ and  $c$ in $G,$ say $w_0w_1\cdots
w_t$, where $w_0=d$  and $w_t=c.$ If $x(c)=1$, it suffices to put
$x=z$. Otherwise, let $q$ be the largest  integer less than $t$ such
that $x(w_q)=1$. A sequence of valid moves at $w_q,w_{q+1},\ldots,
w_{t-1}$ transforms $x$ to a configuration $z$ satisfying $z(c)=1$,
finishing the proof.
\end{proof}

\begin{lemma}\label{lem15} Let $G$ be a  graph,  $a,b\in V(G)$,  $ ab\notin E(G),$   $c\in N_G(a)\cap
 N_G(b)$. Let
 $   S\subseteq  V(G)\setminus (
N_G(a) \cup  N_G(b))$ such that
  $G[S\cup \{c\}]$ is
connected.  Assume that  $x$ and $y$ are two configurations of $G$
such that $x\to_G y$. If $x(a)\not= x(b)$, then there exists    $R
\subseteq V(G)\setminus (S\cup \{c\})$ such that
$x\xrightarrow{*}_Gy+\sum_{v\in R}\chi_{N(v)}$.
\end{lemma}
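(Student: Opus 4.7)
My plan is to reformulate the conclusion in parity form and then induct. Since $x \to_G y$, choose $T \subseteq V(G)$ with $y - x = \sum_{v \in T}\chi_{N_G(v)}$ and set $T_{\mathrm{in}} := T \cap (S \cup \{c\})$. It then suffices to produce a sequence of valid moves from $x$ whose net move-set $M \subseteq V(G)$ satisfies $M \cap (S \cup \{c\}) = T_{\mathrm{in}}$: the choice $R := M \triangle T$ then lies in $V(G) \setminus (S \cup \{c\})$ and yields $y + \sum_{v \in R}\chi_{N_G(v)}$ as the endpoint configuration. I would induct on $|T_{\mathrm{in}}|$; the base $|T_{\mathrm{in}}| = 0$ is immediate, since then $R = T$ works with no moves performed.

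For the inductive step, the key structural observation is that any valid move at a vertex of $S \cup \{c\}$ preserves the inequality $x(a) \neq x(b)$: moves at vertices of $S$ touch neither $a$ nor $b$ because $S \cap (N_G(a) \cup N_G(b)) = \emptyset$, while a move at $c$ flips both $a$ and $b$ simultaneously. Thus, so long as I confine further valid moves to $S \cup \{c\}$, one of $a, b$ remains on and serves as an outside light source. Pick any $u \in T_{\mathrm{in}}$; the idea is to fire $u$ validly and recurse. If $u = c$: fire $c$ directly when $x(c) = 1$; otherwise fire the on member of $\{a,b\}$ first to light $c$, then fire $c$, with the helper move lying outside $S \cup \{c\}$ and thus contributing only to $R$. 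If $u \in S$: first light $c$ as above, then apply the shortest-path propagation used in the proof of Lemma~\ref{lem14} inside the connected graph $G[S \cup \{c\}]$ to carry light from $c$ out to $u$, and finally fire $u$.

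The main obstacle is the $u \in S$ case, where the path-propagation inside $G[S \cup \{c\}]$ fires intermediate vertices that need not belong to $T_{\mathrm{in}}$, seemingly spoiling the parity condition. My plan is to treat the propagation moves as altering $T$ rather than blocking progress: after executing them, the new $T$ controlling the residual sigma-game problem still has $|T_{\mathrm{in}}|$ one smaller, provided I select $u$ carefully (for example as a $T_{\mathrm{in}}$-vertex farthest from $c$ in a fixed spanning tree of $G[S \cup \{c\}]$, so that the path-firings can be absorbed into the next round of the recursion). A secondary subtlety to attend to is the loop case: a loop at $a$ could cause the helper move that lights $c$ to also turn $a$ off and thereby equalise $a$ and $b$; I would resolve this by either using the other vertex of $\{a,b\}$ as the light source, or by following the helper move with a move at $c$ that simultaneously toggles $a$ and $b$ and restores the invariant before invoking the inductive hypothesis.
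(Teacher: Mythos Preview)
Your overall strategy matches the paper's: recast the goal as realising the prescribed subset $T_{\mathrm{in}}\subseteq S\cup\{c\}$ by valid moves modulo moves outside, maintain the invariant $x(a)\neq x(b)$, pick the farthest target vertex, carry light to it along a shortest path in $G[S\cup\{c\}]$, and recurse. Two concrete points need repair.

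\textbf{The induction parameter.} Your claim that after handling one $u$ the residual $|T_{\mathrm{in}}|$ drops by one is false. The intermediate path vertices you fire en route to $u$ lie in $S\cup\{c\}$ and are typically \emph{not} in $T_{\mathrm{in}}$, so they get added (by symmetric difference) to the residual set; $|T_{\mathrm{in}}|$ can grow. What does go down is the maximum distance in $G[S\cup\{c\}]$ from $c$ to a vertex of $T_{\mathrm{in}}$ (lexicographically with the count of vertices at that distance): if $u$ is chosen farthest and the path shortest, every intermediate vertex is strictly closer than $u$. This is precisely the well-ordering the paper uses; your instinct to pick $u$ farthest was right, only the stated measure is not.

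\textbf{The loop case.} Neither of your proposed fixes works. You cannot switch to the other member of $\{a,b\}$ as light source, since it is off. And firing $c$ immediately after the helper move at $v_0$ toggles \emph{both} $a$ and $b$, so from $a=b=0$ you reach $a=b=1$; the invariant is still broken. The correct fix (the paper's Case~2) is to append a second move at $v_0$ at the \emph{end} of the whole sequence $v_0,v_1,\ldots,v_{D+1}$. This is valid because the move at $c=v_1$ has meanwhile turned $v_0$ back on; the two $v_0$-moves cancel in the net, so the only effect on $\{a,b\}$ comes from the single $c$-move, which flips both and preserves the inequality for the next round.

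With these two corrections your argument coincides with the paper's proof.
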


\begin{proof} Our goal   is to show that for any finite set $T\subseteq
S\cup \{c\}$, there exists $R' \subseteq V(G)\setminus (S\cup
\{c\})$ such that $x\xrightarrow{*}_G x+\sum_{v\in T\cup
R'}\chi_{N(v)}$. This can be accomplished by inductively  appealing
to the following claim: let $d\in S\cup \{c\}$ be the vertex whose
distance to $c$ in the graph $G[S\cup \{c\}]$ is the largest, say
$D$, and let $S'$ be the set of those vertices in  $S\cup \{c\}$
which have a distance less then $D$  to $c$ in $G[S\cup \{c\}]$,
then we can find $U \subseteq S'\cup \{a,b\}$ such that
$x\xrightarrow{*}_G w$ where $w=x+\chi_{N(d)}+\sum_{v\in
U}\chi_{N(v)}$ and $w(a)\not= w(b)$. To establish this claim, we
choose   a shortest path in $G[S\cup \{c\}]$ which connects $c$ and
$d$, say $v_1v_2\cdots v_{D+1}$ where $v_1=c$ and $v_{D+1}=d$.
  Let us refer to the only on vertex among $\{a,b\}$ in the configuration $x$ as $v_0$
  and let $t$ be
the largest integer no greater than $D+1$ such that $x(v_t)=1$. What
is left to do is to distinguish two cases.

\paragraph{\sc Case 1:} Either $t>0$ or there is no loop at $v_0$ in $G.$  It is
easy to check that the valid moves at $v_t,v_{t+1},\ldots,v_{D+1}$
in that order  transforms $x$ to the required $w$.

\paragraph{\sc Case 2:}  There is a loop at $v_0$ in $G$ and $t=0$. The sequence  of valid  moves at
$v_0,v_1,\ldots,v_{D+1},v_0$ successively is what we want.
\end{proof}

We now arrive at  the  key ingredient in our proofs of Theorems
\ref{thm8}, \ref{thm9}, \ref{thm10}, \ref{thm4} and \ref{thm5}.

\begin{lemma}\label{lem1516} Let $G$ be a  graph,  $a,b\in V(G)$,  $ ab\notin E(G),$   $c\in N_G(a)\cap
 N_G(b)$ and $N_G(a)\not= N_G(b)$. Let
 $   S\subseteq  V(G)\setminus (
N_G(a) \cup  N_G(b))$ such that
  $G[S\cup \{c\}]$ is
connected.  Assume that  $x$ and $y$ are two configurations of $G$
such that $x\to_G y$. If $x\not= 0$, then there exists    $R
\subseteq V(G)\setminus (S\cup \{c\})$ such that
$x\xrightarrow{*}_Gy+\sum_{v\in R}\chi_{N(v)}$.
\end{lemma}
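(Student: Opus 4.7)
The plan is to deduce Lemma \ref{lem1516} directly from the two preceding lemmas by a short bootstrap argument. Lemma \ref{lem15} already does the heavy lifting: it produces the required $R$ under the stronger hypothesis $x(a)\neq x(b)$. Our only task is to show that the weaker hypothesis ($x\neq 0$ together with $N_G(a)\neq N_G(b)$) can be upgraded, via valid moves only, to a configuration in which $a$ and $b$ hold different states.

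The key observation is that, because valid moves are a particular kind of regular move, $x\xrightarrow{*}_G x'$ always implies $x\to_G x'$, and because the sigma-game is invertible on the whole of $\mathbb{F}_2^{V(G)}$, the reachability relation $\to_G$ is an equivalence. So, first I would invoke Lemma \ref{lem14} with the pair $(a,b)$: since $x\neq 0$ and $N_G(a)\neq N_G(b)$, its hypotheses are satisfied, yielding a configuration $y'$ with $y'(a)\neq y'(b)$ and $x\xrightarrow{*}_G y'$. In particular $x\to_G y'$, and combining this with the given $x\to_G y$ and with invertibility of the sigma-game gives $y'\to_G y$.

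Next I would apply Lemma \ref{lem15}, but with $y'$ playing the role of the starting configuration in place of $x$. The data $a,b,c,S$ is unchanged and still satisfies the hypotheses there, and now $y'(a)\neq y'(b)$ together with $y'\to_G y$ lets the lemma deliver a set $R\subseteq V(G)\setminus(S\cup\{c\})$ such that
\begin{equation*}
y'\xrightarrow{*}_G\ y+\sum_{v\in R}\chi_{N(v)}.
\end{equation*}
Concatenating with the earlier chain $x\xrightarrow{*}_G y'$ yields the desired conclusion.

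I do not expect any genuine obstacle, since every step is a direct citation. The only point one has to be mildly careful about is the bookkeeping that turns the given regular-reachability $x\to_G y$ together with the valid-move step $x\xrightarrow{*}_G y'$ into the regular-reachability $y'\to_G y$ required by Lemma \ref{lem15}; this uses nothing more than the abelian-group description of $\to_G$ recalled in the introduction. The hypothesis $N_G(a)\neq N_G(b)$ of Lemma \ref{lem1516}, which is absent from Lemma \ref{lem15}, enters exactly (and only) where Lemma \ref{lem14} needs it.
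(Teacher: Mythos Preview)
Your proposal is correct and follows exactly the same route as the paper, whose entire proof reads ``Lemma~\ref{lem14} in conjunction with Lemma~\ref{lem15} gives this result.'' Your write-up is simply a careful unpacking of that sentence, including the bookkeeping step (that $x\xrightarrow{*}_G y'$ and $x\to_G y$ together yield $y'\to_G y$ by invertibility of the sigma-game) which the paper leaves implicit.
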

\begin{proof}Lemma \ref{lem14} in conjunction with Lemma \ref{lem15}  gives this result. \end{proof}

\begin{lemma}\label{lem16}
If $G$ is obtained from a path $v_1v_2\cdots v_n$ by adding some
loops, then any configuration $x$ of $G$ can be transformed to a
configuration with light number at most one by a series of valid
moves inside $\{v_2,\ldots,v_n\}$.
\begin{center}\begin{picture}(5,0.8)
\put(1,0.5){\line(1,0){4}} \put(1,0.5){\circle*{0.15}}
\put(2,0.5){\circle*{0.15}} \put(3,0.5){\circle*{0.15}}
\put(4,0.5){\circle*{0.15}} \put(5,0.5){\circle*{0.15}}
\put(0,0.5){\makebox(0,0){$G:$}} \put(1,0.2){\makebox(0,0){$v_1$}}
\put(2,0.2){\makebox(0,0){$v_2$}} \put(3,0.2){\makebox(0,0){\ldots}}
\put(4,0.2){\makebox(0,0){$v_{n-1}$}}
\put(5,0.2){\makebox(0,0){$v_n$}}
\end{picture}\end{center}
\end{lemma}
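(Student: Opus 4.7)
The plan is to induct on $n$. The cases $n=1$ and $n=2$ will be handled directly: for $n=1$ no moves are available and $L(x)\le 1$ trivially, while for $n=2$ a short case analysis over $x(v_1)$, $x(v_2)$, and whether there is a loop at $v_2$ verifies the statement.

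For the inductive step, assume the lemma for all smaller path lengths and let $x$ be a configuration on $P=v_1v_2\cdots v_n$ with $n\ge 3$. The argument will split according to $x(v_1)$. When $x(v_1)=0$, the plan is to invoke the induction hypothesis on the sub-path $v_2v_3\cdots v_n$ (with its inherited loops) using valid moves in $\{v_3,\ldots,v_n\}$; because no vertex in $\{v_3,\ldots,v_n\}$ is adjacent to $v_1$, such moves leave $v_1$ off, and the resulting configuration will have light number at most one on the full path.

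When $x(v_1)=1$, first dispose of the trivial sub-case $x(v_j)=0$ for every $j\ge 2$, where $L(x)=1$ already. Otherwise let $r=\min\{j\ge 2:x(v_j)=1\}$. The key construction is a \emph{wave} of moves performed at $v_r,v_{r-1},\ldots,v_3,v_2$ in this order. The initial move at $v_r$ is valid because $x(v_r)=1$, and every subsequent move at $v_{j-1}$ is valid because the immediately preceding move at $v_j$ toggles $v_{j-1}$ from off to on; here $v_{j-1}$ was off by the minimality of $r$. The terminal move at $v_2$ toggles $v_1$ from on to off, producing a configuration $y'$ with $y'(v_1)=0$, and the argument of the previous paragraph then applies to $y'$ to finish the proof.

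I expect the only genuine obstacle to be the validity of the wave; this is established by the choice of $r$, which ensures that each of $v_2,v_3,\ldots,v_{r-1}$ is initially off and is brought on exactly when we are about to act on it, with no interference from earlier moves. Possible loops along the path play no role in this argument, since the wave only requires that each vertex in the sequence be on at the moment we move at it, and the subsequent appeal to the induction hypothesis allows arbitrary loops inside the sub-path.
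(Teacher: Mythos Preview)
Your proof is correct. The wave argument is sound: by the minimality of $r$, each $v_{j-1}$ with $2\le j\le r$ is untouched until the move at $v_j$ turns it on, and the terminal move at $v_2$ switches $v_1$ off so that the inductive hypothesis on the sub-path $v_2\cdots v_n$ applies cleanly (moves at $v_3,\ldots,v_n$ indeed leave $v_1$ alone).

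The paper's proof uses the same wave idea but packages it as an extremal argument rather than an induction: it picks, among all configurations reachable from $x$ by valid moves in $\{v_2,\ldots,v_n\}$, one $y$ whose leftmost on vertex $v_{t_y}$ is as far right as possible, and shows that if $L(y)\ge 2$ then a wave from the next on vertex down to $v_{t_y+1}$ pushes the leftmost on vertex strictly to the right, contradicting maximality. Your inductive version first clears $v_1$ and then recurses, which is perhaps more modular; the paper's extremal version is a one-shot argument that avoids setting up base cases and the restriction-to-subpath bookkeeping. Both are short and rely on exactly the same local mechanism.
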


\begin{proof} Let $t_y=\infty $ if $y=0$ and $t_y=\min \{t: y(v_t)=1\}$ for any $y\in
\mathbb{F}_2^{V(G)}\setminus \{0\}$. Let $\mathcal {C}$ be the set
of configurations for which we can reach from $x$ by applying a
series of valid moves inside $\{v_2,\ldots,v_n\}.$  Choose  a
configuration $y$ from $\mathcal {C}$ whose  $t_y$ is as large as
possible. It suffices to deduce that $L(y)\leq 1.$ Assuming
otherwise that $L(y)>1$, then there is $t>t_y$ such that $y(v_t)=1$
and $y(v_q)=0$ for any $t_y<q<t.$ Now a series of valid moves at
$v_t,v_{t-1},\ldots, v_{t_y+1}$ transforms $y$ into another member
$y'$ of $\mathcal {C}$ with $t_{y'}>t_y$, yielding a contradiction.
\end{proof}

Our proof of Theorem \ref{thm8} rests on Lemmas  \ref{lem15},
\ref{lem1516} and \ref{lem16}.

\begin{proof}[Proof of Theorem \ref{thm8}]  If $x=0$, then
$ML^*_G(x)=ML_G(x)=0$ and hence we are done. Now consider $x\not=
0.$ Choose $y$ such that $x\to_G y$ and $L(y)=ML_G(x)$. Taking
$a=v_{11}$, $b=v_{21}$ and $c=v$, we deduce from  Lemmas
\ref{lem1516} (Lemma   \ref{lem15})  that there exists $z\in
\mathbb{F}_2^{V(G)}$ such that $x\xrightarrow{*}_Gz=y+\sum_{v\in
R}\chi_{N(v)}$ where $R$ is a subset of $S$. This means that
$z(u)=y(u)$ for all $u\in V(G)\setminus (S\cup \{v\})$. Therefore,
the result will follow if we can show that there exists  a series of
valid moves inside $S$ which transforms $z$ to $z'$ where $z'$ is a
configuration with at most two on vertices among $S\cup \{v\}.$ This
last step is completed by making use of  Lemma \ref{lem16} on
$G[\{v,v_{11},\ldots,v_{1n_1}\}]$  and
$G[\{v,v_{21},\ldots,v_{2n_2}\}]$, respectively.
\end{proof}

\begin{remark}  In much the same vein as   the above proof of Theorem \ref{thm8},  we can
prove the following extra claim where
all  undefined  parameters  are as described in Theorem \ref{thm8}:
Putting $x_1$ to be the restriction of $x$ on $V(G_1)$, we have
$$ML^*_G(x)\leq ML_{G_1}(x_1)+2$$  provided either
 $\max (n_1,n_2)\ge 2$ or $G$
has a loop at either $v_{11}$ or $v_{21}$, or
$x(v_{11})\not=x(v_{21})$.
\end{remark}

  We are going to establish Theorem
\ref{thm9} in the sequel.   To do that, we need to develop  some
special facts on trees.

 Let $G$ be any graph and $v\in V(G)$. We call $v$
  an {\em end
vertex} of $G$ if $\deg_G(v)\leq 1$.   A subset of
      $V(G)$ is {\em good} for
$G$ if it does not contain any branch vertex of $G$.
   The vertex $v$ of
$G$  is  {\em appropriate} \cite{BFS,Nylen} provided  there  are at
least two  (connected) components  of  $G[V(G)\setminus \{v\}]$
which are good for $G$.  Here is a very intuitive result, which may
be traced back to   a paper of Nylen  \cite{Nylen}.

\begin{lemma}   \cite[Lemma 3.1]{Nylen}  Let $H$ be obtained from a tree by attaching some loops.
Suppose $H$ has at least two vertices. Then there exists a path in
$H$
 such that the path contains
two different leaves of $H$ and   contains  at most one branch
 vertex of  $H$. \label{lem17}\end{lemma}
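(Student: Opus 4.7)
The plan is to forget the loops entirely and work with the underlying tree $T$ of $H$: loops do not affect which vertices are leaves or branch vertices (since degree, as defined in the paper, counts only non-loop edges), nor do they contribute any path vertices, so any path witnessing the conclusion in $T$ also works in $H$. So the whole statement is a purely combinatorial fact about a tree on at least two vertices.

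First I would dispose of the degenerate case: if $T$ has no branch vertex, then $T$ is itself a path, its two endpoints are distinct leaves, and the path from one endpoint to the other has zero branch vertices. So from now on assume the set $B$ of branch vertices of $T$ is non-empty. The central idea is to locate an \emph{extremal} branch vertex $v$ such that at most one component of $T\setminus v$ contains another branch vertex. To produce such a $v$, I would let $T^{\ast}$ be the smallest subtree of $T$ whose vertex set contains $B$ (equivalently, the union of all paths in $T$ joining two vertices of $B$). Then $T^{\ast}$ is itself a tree: if $|B|=1$ I take $v$ to be the unique vertex of $B$, and otherwise I take $v$ to be any leaf of $T^{\ast}$. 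In either case, exactly one of the components of $T\setminus v$ meets $B$, so the remaining $\deg_T(v)-1\ge 2$ components contain no branch vertex of $T$.

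Next I would observe that any connected subtree of $T$ containing no branch vertex of $T$ is itself a path (every vertex has degree at most $2$ in the subtree), and its endpoints have degree $1$ in $T$, hence are leaves of $T$. Applying this to the $\ge 2$ branch-free components of $T\setminus v$ and adjoining the edge back to $v$, each such component together with $v$ forms a path in $T$ from $v$ to a leaf of $T$, internally avoiding $B$. Picking two of these arms and concatenating them at $v$ gives a path in $T$ between two distinct leaves whose only branch vertex is $v$, which is exactly what the lemma asks for.

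The only step that could be mistaken is the existence of the extremal branch vertex $v$ and the correct identification of the endpoints of the arms as leaves of $T$ (as opposed to merely leaves of the arm). Both reduce to the standard facts that a finite tree with at least two vertices has at least two leaves and that a degree-$1$ vertex in a branch-free component of $T\setminus v$ is still degree $1$ in $T$, since its only edge into $v$ is suppressed. Once these are in hand the proof is essentially bookkeeping; there is no genuine obstacle, which is consistent with the result being attributed to Nylen and used here only as an intuitive structural lemma.
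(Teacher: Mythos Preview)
Your proof is correct and follows essentially the same approach as the paper's: both dispose of the branch-free case trivially, then take the smallest subtree spanning all branch vertices, pick an end vertex (leaf) $v$ of it, and observe that at least two of the components of $T\setminus v$ are branch-free paths terminating in leaves of $T$, which concatenate at $v$ to give the desired path. Your writeup is a bit more explicit about why the arm endpoints are genuine leaves of $T$, but the underlying idea is identical.
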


\begin{proof}
The result is trivial if $H$ does not contain any branch vertex. For
the remaining case, consider the  graph $G=H[S]$ where $S$ is the
inclusion-wise smallest  subset of $V(H)$ which contains all branch
vertices of $H$  and makes $H[S]$ connected. Note that   $G$
definitely has some end vertex $v$.
 It is easy to see that all  components of $H[V(H)\setminus
 \{v\}]$ other than that which contains $S$  are good for $H$ and there
 are at least two such components.
This says that
 $v$ is an  appropriate vertex of $H$ and  the required path can be obtained by
 combining  $v$ with  any two  components of
$H[V(H)\setminus \{v\}]$ which  are good for $H$.\end{proof}

\begin{remark}\label{re21} Keep the  assumption on $H$ as in Lemma \ref{lem17} and let $u$ be a leaf of $H$.
The above proof of Lemma \ref{lem17} could be extended a bit more to
yield an inductive proof of  the following still `obvious' fact: if
$H$ has at least three leaves, then the path asserted in Lemma
\ref{lem17}  can be further required to avoid $u$.
\end{remark}

For any two integers $n,k\geq 1,$  let $P_{n,k}$ be the graph with
vertex set $\{v_1,v_2,\ldots,v_{n},w_1,\ldots,w_k\}$ and edge set
$\{ v_1v_2,$ $v_2v_3,$ $\ldots,$ $v_{n-1}v_n,$ $v_{n}w_1,$ $\ldots,$
$v_{n}w_k\}$; see Fig. \ref{fig4}.   We refer to any graph obtained
from  $P_{n,k}$ by attaching some loops as  a {\em rake with $k$
teeth} $w_1,\ldots,w_k$ and an {\em $n$-handle} $v_1,\ldots,v_n$. We
call the   vertex $v_1$ the {\em top} of the rake and the other
vertices the {\em common vertices}. When $k=1,$ $P_{n,k} $ is just a
path of length $n$ one of whose end vertices is specified as the
top. In preparation for our proof of Theorem \ref{thm9}, we prove
the following simple fact, whose role will be similar to that of
Lemma \ref{lem16} in proving Theorem \ref{thm8}.

\begin{figure}[ht]
\begin{center}\begin{picture}(6,1)
\put(0,0.5){\line(1,0){5}} \put(4,0.5){\line(2,1){1}}
\put(4,0.5){\line(2,-1){1}} \multiput(0,0.5)(1,0){6}{\circle*{0.15}}
\multiput(5,0)(0,1){2}{\circle*{0.15}}
\put(0,0.2){\makebox(0,0){$v_1$}} \put(1,0.2){\makebox(0,0){$v_2$}}
\put(2,0.2){\makebox(0,0){$\ldots$}}
\put(3,0.2){\makebox(0,0){$v_{n-1}$}}
\put(4,0.2){\makebox(0,0){$v_n$}} \put(5.5,1){\makebox(0,0){$w_1$}}
\put(5.4,0.6){\makebox(0,0){$\vdots$}}
\put(5.5,0){\makebox(0,0){$w_k$}}
\end{picture}\end{center}
\caption{A rake with $k$ teeth and an $n$-handle}\label{fig4}
\end{figure}
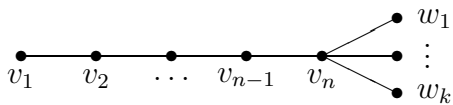

\begin{lemma}\label{lem20} Let $G$ be a rake with $k$ teeth and an $n$-handle;
 see Fig. \ref{fig4}.
     For any configurations $x,y$ of
$G$,  if there is a sequence  of regular   moves inside the common
vertices of $G$ which brings $x$
 to  $y$, then there exists $z\in \mathbb{F}_2^{V(G)}$ satisfying  $L(z)\leq L(y)+1$ and  a series of valid moves inside
the common vertices which transforms $x$ to $z$.
\end{lemma}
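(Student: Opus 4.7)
If $x = 0$, or if the only lit vertex of $x$ is the top $v_1$, then no valid move inside common vertices can be performed, so we may take $z = x$ and obtain $L(z) \le 1 \le L(y) + 1$. Henceforth assume $x$ has at least one lit common vertex. The plan is to mimic the two-phase strategy from the proof of Theorem \ref{thm8}, specialized to the rake: first use Lemma \ref{lem1516} to transform $x$ by valid moves inside common vertices into an intermediate configuration $z_1$ that agrees with $y$ on every tooth, and then apply Lemma \ref{lem16} to the handle to reduce its lit count on the handle to at most one.

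For Phase~1, assume $n \ge 3$ (the small cases $n \le 2$ are addressed by a separate case analysis, for instance by taking $a,b$ to be two teeth with differing loop structures, or by handling $n=1,2$ directly). Set $a = v_{n-1}$, $b = w_1$, $c = v_n$ and $S_{\mathrm{L}} = \{w_1,\ldots,w_k\}$ in Lemma \ref{lem1516}. Then $a,b,c$ are all common, $ab\notin E(G)$, $c\in N_G(a)\cap N_G(b)$, $v_{n-2}\in N_G(a)\setminus N_G(b)$ so $N_G(a)\neq N_G(b)$, $G[S_{\mathrm{L}}\cup\{c\}]$ is a star centered at $v_n$ and hence connected, and $S_{\mathrm{L}}$ is disjoint from $N_G(a)\cup N_G(b) = \{v_{n-2},v_n\}$ (modulo minor adjustments when there is a loop at $a$ or $b$). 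Inspection of the proofs of Lemmas \ref{lem14} and \ref{lem15} shows that the valid-move sequences they build stay inside common vertices: in a rake every shortest path between two common vertices avoids the leaf $v_1$, and our standing assumption that $x$ has a lit common vertex lets us choose the starting lit vertex $d$ of Lemma \ref{lem14}'s proof to be common. Consequently Lemma \ref{lem1516} yields $z_1$ with $x\xrightarrow{*}_G z_1 = y + \sum_{v\in R}\chi_{N(v)}$ for some $R\subseteq V(G)\setminus(S_{\mathrm{L}}\cup\{c\}) = \{v_1,\ldots,v_{n-1}\}$. Since $N_G(v)\subseteq\{v_1,\ldots,v_n\}$ for every $v\in R$, the correction is supported entirely on the handle, so $z_1(w_i) = y(w_i)$ for every tooth $w_i$.

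In Phase~2 I would apply Lemma \ref{lem16} to the handle $G[\{v_1,\ldots,v_n\}]$ (a path with possibly some attached loops) using valid moves inside $\{v_2,\ldots,v_n\}$ to transform $z_1$ into $z$ with at most one lit vertex on the handle; combined with the tooth equality from Phase~1 this yields $L(z) \le 1 + |\{i : y(w_i)=1\}| \le L(y) + 1$, as required. The main obstacle is that each move at $v_n$ in the rake also flips every tooth simultaneously (unlike in the pure path underlying Lemma \ref{lem16}), so a naive run of the procedure may disturb the teeth of $z_1$. I expect to control this by adjusting the Lemma \ref{lem16} sequence so that the number of moves at $v_n$ has the parity that leaves the teeth invariant --- for instance by appending a compensating move at $v_n$ when the current handle state permits it, or by choosing between the natural and time-reversed forms of Lemma \ref{lem16}'s sequence, which differ in $v_n$-parity. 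Working out this parity management, together with the small-$n$ and loop-adjusted corner cases of Phase~1, is where the bulk of the remaining technical effort lies.
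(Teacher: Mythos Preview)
Your proposal is explicitly unfinished, and the gap you flag in Phase~2 is genuine rather than cosmetic. After Phase~1 you have $z_1$ agreeing with $y$ on the teeth and then want to run Lemma~\ref{lem16} on the handle; but in $G$ every move at $v_n$ flips all $k$ teeth simultaneously, so if the Lemma~\ref{lem16} sequence happens to use $v_n$ an odd number of times the teeth end up complemented and you only get $L(z)\le 1+\bigl(k-L(y|_{\text{teeth}})\bigr)$, which need not be bounded by $L(y)+1$. Neither suggested fix clearly works: appending a compensating move at $v_n$ requires $v_n$ to be lit at the end of the Lemma~\ref{lem16} run, which that lemma does not promise; and the ``time-reversed'' form of Lemma~\ref{lem16} that avoids $v_n$ would instead use moves at $v_1$, which is the top and hence not a common vertex. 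Variants such as running Lemma~\ref{lem16} only on $v_1,\ldots,v_{n-1}$ avoid $v_n$ but leave $v_n$'s state uncontrolled, so the $+1$ budget is already consumed before the teeth are addressed. On top of this you still owe the $n\le 2$ and loop corner cases in Phase~1.

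The paper sidesteps all of this with a short, self-contained argument that does not call on Lemmas~\ref{lem1516} or~\ref{lem16} at all. One maintains an expression $x'=y+\sum_{v\in S_{x'}}\chi_{N(v)}$ with $S_{x'}$ a set of common vertices, picks $u\in S_{x'}$ nearest the top $v_1$, and observes that $x'$ already matches $y$ on every vertex strictly closer to $v_1$ than $f(u)$. If every vertex at distance $\ge d(\{u\})$ from $v_1$ is off in $x'$, then $x'$ and $y$ can disagree only at the single vertex $f(u)$, so $L(x')\le L(y)+1$ and we stop. Otherwise some lit $u'$ lies at distance $\ge d(\{u\})$, and valid moves along the path from $u'$ back to $u$ realise the regular move at $u$ together with moves strictly farther from the top, shrinking $S_{x'}$ in a well-founded order. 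Because this greedy ``borrow light from below to replay each regular move'' scheme treats handle and teeth uniformly, no parity bookkeeping ever arises.
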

\begin{proof} For any set $S\subseteq V(G)$, denote by $d(S)$ the minimum one
among all distances in $G$ between the top $v_1$ and elements of
$S.$ For any $u\in V(G)\setminus \{v_1\}$, put $\overleftarrow{u}$
to be the set of vertices other than $u$ in the shortest path
connecting $u$ and $v_1$, $f(u)$ be the unique vertex adjacent to
$u$ falling in $\overleftarrow{u}$, and set $\overrightarrow{u}$ to
be $V(G)\setminus \overleftarrow{u}$.

Suppose that  by valid moves inside $V(G)\setminus \{ v_1\}$ we get
to $x'=y+\sum_{v\in S_{x'}}\chi_{N_G(v)}$ from $x$ for some set
 $S_{x'}\subseteq V(G)\setminus \{ v_1\}$.  There are three cases
to consider.   If $S_{x'}$ is empty, the proof is completed by
setting $z=x'$. Otherwise, we pick a vertex from $S_{x'}$, say $u$,
whose distance to $v_1$ equals $d(S_{x'})$. Note that $x'$ coincides
with $y$ when restricted on  $\overleftarrow{f(u)}$. If  $x'$
vanishes on $\overrightarrow{u}$, then  the only possible on vertex
for $x'$ inside $V(G)\setminus \overleftarrow{f(u)}$ is $f(u)$  and
this demonstrates that $L(x')\leq   L(y)+1$ and so the required $z$
can still be taken as $x'$. For the moment, it is sufficient to
consider the third case that vertices in $\overrightarrow{u}$ are
not all off in the assignment $x'$. We can thus find
 $u'\in \overrightarrow{u}$ such that $x'(u')=1$ and $x'(u_1)=x'(u_2)=\cdots =x'(u_t)=0$
 where $u_1u_2\cdots u_tu'$ is the shortest path connecting $u$ and
 $u'$  (Note that $t=0$ when $u=u'$.).  A sequence  of valid moves
 along $u',u_t,\ldots,u_1$ turns $x'$ into $x''$ for which we have the following:  \begin{enumerate}
\item[{\textrm (i)}] $ x\xrightarrow{*}_Gx''$;
\item[{\textrm (ii)}] There exists $S_{x''}\subseteq     V(G)\setminus \{ v_1\}$
such that $y=x''+\sum_{v\in   S_{x''}}\chi_{N(v)}$ and either
$d(S_{x''})>d(S_{x'})$ or $d(S_{x''})=d(S_{x'})=d$ but  $\{v\in
S_{x''}:\  d(\{v\})=d\}$ is a proper subset of  $\{v\in S_{x'}:\
d(\{v\})=d\}$. Note that the former case happens  if
$S_{x'}\setminus \{w_1,\ldots,w_k\}\not= \emptyset.$
 \end{enumerate}
At this stage it is not difficult to see that we can apply the above
procedure  repeatedly  and finally terminate at one of the first two
cases. This ends  the proof.
\end{proof}

\begin{remark}\label{re24}  Mimicking the above proof of Lemma \ref{lem20},
 it is easy to show that $ML^*_G(x)\leq ML_G(x)+1$ for any rake  $G$  and   $x\in \mathbb{F}_2^{V(G)}$.
\end{remark}

Having derived Lemmas  \ref{lem1516},  \ref{lem17} and \ref{lem20},
we are ready to  give a proof of Theorem \ref{thm9}.

\begin{proof}[Proof of Theorem \ref{thm9}]

The tightness of the bound follows from
 Example \ref{ex3} and hence we just need to establish that  bound.

By virtue of Theorem \ref{thm8},  we could and will make the
following assumption from now on: for any appropriate vertex $v$ of
$G$, every  component of $G[V(G)\setminus \{v\}]$ that is good for
$G$ contains exactly one vertex and this vertex has no  loop
attached and all these  good components (for the same $v$) are in
the same states in the assignment $x.$  We further  rule out  the
trivial case that $x=0.$ To complete the proof, let us distinguish
two cases.

If there is at most one branch vertex in $G,$ we can infer that $G$
must be a rake and therefore Remark \ref{re24} yields the result.
(Indeed, under the current assumption, we can prove directly a
stronger result that
  $ML^*_G(x)\leq 1$.)

We continue to dwell on the case that $G$  contains at least two
branch vertices. For each appropriate vertex $v$ of $G$, we assume
that $G[V(G)\setminus \{v\}]$ has  $k_v$  components  which are good
for $G$, each of which should be a singleton set by our assumption.
Now, for each appropriate vertex $v$ of $G$,  delete $k_v-1$ good
components corresponding to $v$ as well as their incident edges and
call the resulting graph $H.$   By Lemma \ref{lem17} applied to $H,$
we conclude that there is a path $P$ in $H$ which contains two
different leaves of $H$ and at most one branch vertex of $H.$ We
consider two subcases.

\paragraph{\sc Subcase 1:}  $P=u_1u_2\cdots u_p$ and $P$ does not
contain any branch vertex of $H.$  Since $G$ has at least two branch
vertices, the only possibility is that $u_2$ and $u_{p-1}$ are the
two branch vertices of $G.$    Let $H'$ be the rake with top $u_1$
obtained from $G$ by removing  $S=N_G(u_{2})\cap (V(G)\setminus
V(H))$. Note that any regular move inside $S$ can only affect the
state of $u_2.$  This says that there is a set of regular moves
inside $V(H')$ which takes us from $x$ to a configuration $y$
fulfilling $L(y)\leq ML_G(x)+1.$   Thanks to   Lemma \ref{lem20}, we
can now assert that there is $z$ with $L(z)\leq ML_G(x)+2$ and
$x\xrightarrow{*}_Gz$, implying  $ML^*_G(x)\leq ML_G(x)+2$, as
claimed.

\paragraph{\sc Subcase 2:}  $P=u_1u_2\cdots u_puv_q\cdots v_1$, $p\geq q\geq 1,$ and $u$ is a branch vertex of $H.$
Denote by $U$ the  component of  $G[V(G)\setminus \{u\}]$ containing
$u_p$ and by $V$ the  component  of  $G[V(G)\setminus \{u\}]$
containing $v_q$.  Assume that
 $ x\to _G y$ and $L(y)=ML_G(x)$.
   Take $a=u_p,b=v_q$,
  $c=u$ and $S=V(G)\setminus (\{u\}\cup U\cup V)$.
  Due to our construction of  $H,$  we see   that $p>1.$  This then
  allows us to utilize   Lemma  \ref{lem1516} to get  that  there exists  $R\subseteq U\cup V $ such that
$x\xrightarrow{*}_Gy+\sum_{v\in R}\chi_{N(v)}$.  Observe that both
$G[U\cup\{u\}]$ and $G[V\cup\{u\}]$ are rakes with top $u$.
Henceforth, an application of Lemma \ref{lem20} to $G[U\cup\{u\}]$
 and $G[V\cup\{u\}]$ completes the proof.
\end{proof}

\begin{proof}[Proof of Theorem  \ref{thm10}] This can be done analogous to the previous proof of Theorem \ref{thm9},
with   Remark  \ref{re21} in place of Lemma \ref{lem17}. We omit the
details.
\end{proof}

\newcommand{\journalname}[1]{\textrm{#1}}
\newcommand{\booktitle}[1]{\textrm{#1}}
\newcommand{\papertitle}[1]{\textit{#1}}

\end{document}